\documentclass{proc-l}

\usepackage{xurl}
\usepackage[hidelinks]{hyperref}
\numberwithin{equation}{section}

\newtheorem{theorem}{Theorem}[section]
\newtheorem{proposition}[theorem]{Proposition}
\newtheorem{lemma}[theorem]{Lemma}
\newtheorem{corollary}[theorem]{Corollary}

\theoremstyle{definition}
\newtheorem{definition}[theorem]{Definition}

\theoremstyle{remark}
\newtheorem{remark}[theorem]{Remark}

\newcommand{\U}{\mathbb U}
\newcommand{\QU}{\mathbb Q\mathbb U}
\newcommand{\N}{\mathbb N}
\newcommand{\R}{\mathbb R}
\newcommand{\one}{\mathbf 1}
\newcommand{\half}{\mathbf h}
\newcommand{\ext}{\operatorname{ext}}
\newcommand{\conv}{\operatorname{conv}}
\newcommand{\cl}{\operatorname{cl}}
\newcommand{\diam}{\operatorname{diam}}

\newcommand{\rank}{\operatorname{rank}}

\newcommand{\bary}{\operatorname{bar}}
\newcommand{\dd}{\mathrm d}

\begin{document}

\title[Finite and Urysohn obstructions to S-prime]{Finite and Urysohn obstructions to Sabok's S-prime simplex questions}

\author{Yutong Zhang and Yaoran Yang}
\address{School of Mathematics, Sichuan University, Chengdu, 610065, China}
\email{yutongzhang@stu.scu.edu.cn; yangyaoran@stu.scu.edu.cn}

\subjclass[2020]{Primary 46A55; Secondary 54E35, 52A07, 03E15, 46B50}
\keywords{Choquet simplex, Poulsen simplex, Urysohn sphere, Katetov functions, distance-row polytope, compact convex sets}
\date{}
\commby{[Editor Name]}

\begin{abstract}
Sabok asked whether the compact convex set \(S'(X)\) attached to a separable
metric space of diameter at most one is always a simplex, and whether
\(S'(\mathbb U_1)\) is the Poulsen simplex.  We give negative answers.  For
finite \(X=\{x_1,\ldots,x_m\}\), \(S'(X)\) is affinely homeomorphic to the
convex hull of the rows \(r_i=(d(x_i,x_1),\ldots,d(x_i,x_m))\) of the distance
matrix; it is a simplex exactly when these rows are affinely independent.  The
diameter-one four-cycle gives the minimal finite obstruction.  For the Urysohn
sphere, using the rational Urysohn sphere \(D\) as coordinates, we identify the
coordinate model \(S'_D(\mathbb U_1)\) with the Kat\'etov compactum \(K(D)\).
Four explicit extreme points \(f_A,g_A,\mathbf 1,\mathbf h\) satisfy
\(f_A+g_A=\mathbf 1+\mathbf h\), giving two distinct representing measures for
\((3/4)\mathbf 1\).  Hence \(S'(\mathbb U_1)\) is not a Choquet simplex.
\end{abstract}

\maketitle

\section{Introduction}

Sabok introduced, for a separable metric space \(X\) of diameter at most one, a
compact convex set denoted \(S'(X)\) in his work on the complexity of affine
homeomorphism of Choquet simplices.  In the distance-coordinate form used here,
one chooses a countable dense set \(D\subseteq X\) and takes the closed convex
hull of
\[
        x\longmapsto (d(x,d))_{d\in D}
\]
in a Hilbert cube; Sabok's coordinate-set invariance identifies the compacta
obtained from different dense coordinate sets \cite[Proposition~5.11]{Sabok2016}.
Sabok asked \cite[Question~9.4]{Sabok2016} whether \(S'(X)\) is always a
simplex, and whether \(S'(\mathbb U_1)\) is affinely homeomorphic to the Poulsen
simplex, the unique nontrivial metrizable Choquet simplex with dense extreme
boundary \cite{Poulsen1961,LindenstraussOlsenSternfeld1978}.  We give negative
answers to both questions.  This concerns Sabok's projected compactum \(S'\),
not the unfolded compactum \(S(\mathbb U_1,d_{\mathbb U_1})\), which Sabok
proved to be a simplex \cite[Proposition~5.19]{Sabok2016}.

The finite answer is already visible in the distance matrix.  If
\(X=\{x_1,\ldots,x_m\}\) is finite, then \(S'(X)\) is affinely homeomorphic to
\[
        P_X=\conv\{r_1,\ldots,r_m\},
        \qquad
        r_i=(d(x_i,x_1),\ldots,d(x_i,x_m)).
\]
Consequently \(S'(X)\) is a simplex exactly when the distance rows are affinely
independent, equivalently
\[
        \ker M_X\cap\{a\in\R^m:\sum_i a_i=0\}=\{0\}.
\]
The diameter-one four-cycle satisfies \(r_0+r_2=r_1+r_3\), giving a
non-degenerate parallelogram and hence a minimal finite counterexample; no
counterexample exists on fewer than four points, and examples exist in every
finite cardinality at least four.

The finite relation does not prove the Urysohn statement.  If the same four
points are embedded in \(\mathbb U_1\), the additional distance coordinates to a
dense subset can destroy the finite dependence; indeed Sabok proved a finite
independence property for Urysohn distance coordinates
\cite[Proposition~5.15]{Sabok2016}.  We therefore work in the full
coordinate compactum.  Taking \(D=\mathbb Q\mathbb U_1\subset\mathbb U_1\) as the
coordinate set, we identify \(S'_D(\mathbb U_1)\) with
\[
        K(D)=\{f\in[0,1]^D: |f(x)-f(y)|\le d(x,y)\le f(x)+f(y)
        \text{ for all }x,y\in D\},
\]
the compactum of bounded Kat\v{e}tov functions on the rational Urysohn sphere.
For a side-one equilateral triple \(A=\{a_0,a_1,a_2\}\subset D\), put
\(r_A(x)=d(x,A)\) and
\[
        f_A(x)=\min\{1,\tfrac12+r_A(x)\},\qquad
        g_A(x)=\max\{\tfrac12,1-r_A(x)\}.
\]
We prove that \(f_A\), \(g_A\), the constant function \(\one\), and the constant
function \(\half\equiv1/2\) are extreme points of \(K(D)\), while
\[
        f_A+g_A=\one+\half.
\]
Thus the point \((3/4)\one\) has two different representing probability measures
carried by extreme points.  Hence \(S'(\mathbb U_1)\) is not a Choquet simplex
and cannot be the Poulsen simplex.

Sections~\ref{sec:finite-model}--\ref{sec:barycentric-finite} treat finite
metric spaces.  Sections~\ref{sec:urysohn-model}--\ref{sec:urysohn-non-simplex}
prove the Urysohn obstruction.

\section{The finite-coordinate model and criterion}
\label{sec:finite-model}

We first fix the finite notation.  Let \((X,d)\) be a metric space with
\(\diam(X)\le1\).  If \(D=(d_n)_{n\ge1}\) is a countable dense sequence, write
\[
        \iota_D:X\longrightarrow[0,1]^\N,
        \qquad
        \iota_D(x)=\bigl(d(x,d_1),d(x,d_2),\ldots\bigr).
\]
For this chosen distance-coordinate set, write
\[
        S'_D(X)=\overline{\conv}^{[0,1]^\N}\,\iota_D(X).
\]
This is the distance-coordinate instance of Sabok's construction.  When no
ambiguity can arise, and in view of Sabok's affine invariance under changes of
dense coordinate set, the subscript \(D\) is suppressed.  For finite
spaces, repetitions in the dense sequence only repeat coordinates; deleting
repeated coordinates gives an affine homeomorphic compact convex set.

\begin{definition}
Let \(X=\{x_1,\ldots,x_m\}\) be a finite metric space of diameter at most one.
Its distance matrix and distance rows are
\[
        M_X=(d(x_i,x_j))_{1\le i,j\le m},
        \qquad
        r_i=(d(x_i,x_1),\ldots,d(x_i,x_m))\in\R^m.
\]
The distance-row polytope is
\[
        P_X=\conv\{r_1,\ldots,r_m\}\subseteq\R^m.
\]
\end{definition}

\begin{proposition}[finite reduction]\label{prop:finite-reduction}
Let \(X=\{x_1,\ldots,x_m\}\) be finite of diameter at most one.  Then \(S'(X)\)
is affinely homeomorphic to \(P_X\).  More precisely, for any dense enumeration
of \(X\), deleting repeated coordinates and then, if necessary, applying the
coordinate permutation induced by the first occurrences identifies \(S'(X)\)
with
\[
        \conv\{r_1,\ldots,r_m\}=P_X.
\]
\end{proposition}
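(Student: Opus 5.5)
Since \(X\) is finite, any dense sequence \(D=(d_n)_{n\ge1}\) in \(X\) is simply an enumeration of \(X\) with repetitions in which every point occurs at least once. The plan is to describe \(S'_D(X)\) explicitly and then reduce coordinates.

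The first step handles the closure. The set \(\iota_D(X)\) consists of at most \(m\) points, so \(\conv\iota_D(X)\) is a polytope. A polytope is compact in the product topology, because it is the image of the standard \((m-1)\)-simplex under a continuous affine map into \([0,1]^\N\). Hence it is already closed, and \(S'_D(X)=\conv\{\iota_D(x_1),\ldots,\iota_D(x_m)\}\).

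The second step deletes repeated coordinates. For each \(j\), let \(n_j\) be the index of the first occurrence of \(x_j\) in \(D\), and put \(\pi(t)=(t_{n_1},\ldots,t_{n_m})\). This map \(\pi:[0,1]^\N\to\R^m\) is a continuous linear projection, and by construction \(\pi(\iota_D(x_i))=r_i\). Since \(\pi\) is linear, \(\pi(S'_D(X))=\conv\{r_1,\ldots,r_m\}=P_X\). This is the step where the coordinate permutation is absorbed: ordering the selected coordinates by \(j\) rather than by \(n_j\) is exactly the permutation induced by the first occurrences.

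The third step proves that \(\pi\) is injective on \(S'_D(X)\), and this is the only point needing care. Take \(t\in S'_D(X)\), say \(t=\sum_i\lambda_i\iota_D(x_i)\). For every \(n\) with \(d_n=x_j\) we have \(t_n=\sum_i\lambda_i d(x_i,x_j)=t_{n_j}\). So every point of \(S'_D(X)\) satisfies the linear identities \(t_n=t_{n_j}\) whenever \(d_n=x_j\). Consequently \(t\) is determined by \(\pi(t)\) through the linear map \(\sigma:\R^m\to[0,1]^\N\) given by \(\sigma(s)_n=s_j\) when \(d_n=x_j\). The map \(\sigma\) is continuous, since each coordinate is a coordinate projection, and \(\sigma\circ\pi=\mathrm{id}\) on \(S'_D(X)\).

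It follows that \(\pi|_{S'_D(X)}\) is a continuous affine bijection onto \(P_X\) with continuous affine inverse \(\sigma|_{P_X}\), hence an affine homeomorphism. Because the argument works for every dense enumeration \(D\), this also recovers, in the finite case, the independence of the coordinate set without appealing to Sabok's invariance result.

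I expect no real obstacle. The only subtle points are remembering that the product topology makes the closure step trivial, and checking that the repeated-coordinate identities hold on the whole convex hull and not just at its vertices; linearity gives the latter immediately.
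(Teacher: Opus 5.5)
Your proposal is correct and follows essentially the same route as the paper: project onto the first-occurrence coordinates, observe that each repeated coordinate agrees with a retained one on the rows and hence on their convex hull, which gives the affine inverse, and note that the hull of finitely many points is already compact. The only slip is cosmetic: \(\sigma\) should be defined into \(\R^\N\) rather than \([0,1]^\N\). This is harmless, since you only apply it on \(P_X\).
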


\begin{proof}
Let \((u_n)_{n\ge1}\) be a dense enumeration of \(X\), and let
\(u_{n_k}=x_{\sigma(k)}\), \(1\le k\le m\), be the first occurrences of the
points of \(X\).  Projection to the coordinates \(n_1,\ldots,n_m\) sends
\(\iota_u(x_i)\) to
\[
        \bigl(d(x_i,x_{\sigma(1)}),\ldots,d(x_i,x_{\sigma(m)})\bigr),
\]
a coordinate permutation of \(r_i\).  Conversely, each deleted coordinate is a
repeat of one of the retained coordinates on all distance rows and hence on
their convex hull, so re-inserting the repeated coordinates gives the affine
inverse.  Since there are only finitely many rows, the convex hull is compact;
a final coordinate permutation identifies it with \(P_X\).
\end{proof}

\begin{lemma}[distance rows are vertices]\label{lem:rows-vertices}
For every finite metric space \(X=\{x_1,\ldots,x_m\}\) of diameter at most one,
each row \(r_i\) is an extreme point of \(P_X\).  Thus
\[
        \ext(P_X)=\{r_1,\ldots,r_m\}.
\]
\end{lemma}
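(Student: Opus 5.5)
To show that $r_i$ is an extreme point of $P_X$, I will exhibit a linear functional that is uniquely minimized over the rows at $r_i$. The natural choice is evaluation at the $i$-th coordinate. We have $(r_j)_i=d(x_j,x_i)$. This equals $0$ when $j=i$ and is strictly positive when $j\neq i$, because $X$ is a metric space and so distinct points are at positive distance.

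Define $\varphi_i(v)=v_i$ on $\mathbb R^m$. Then $\varphi_i(r_i)=0$, while $\varphi_i(r_j)>0$ for all $j\ne i$. Now suppose $r_i=\sum_j\lambda_j r_j$ is a convex combination. Applying $\varphi_i$ gives
\[
0=\sum_{j\ne i}\lambda_j\, d(x_j,x_i).
\]
Since every $d(x_j,x_i)$ with $j\ne i$ is positive, this forces $\lambda_j=0$ for all $j\ne i$. The same argument covers the more standard formulation $r_i=tp+(1-t)q$ with $p,q\in P_X$ and $0<t<1$. Since $\varphi_i\ge0$ on $P_X$, we get $\varphi_i(p)=\varphi_i(q)=0$. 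Writing $p$ and $q$ as convex combinations of the rows, all of their weight must then sit on indices $j$ with $d(x_j,x_i)=0$, that is, on $j=i$. Hence $p=q=r_i$. So $r_i$ is exposed, and in particular extreme.

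For the equality $\ext(P_X)=\{r_1,\ldots,r_m\}$, I use the elementary fact that every extreme point of the convex hull of a finite set belongs to that set. If a point is a convex combination with at least two positive weights on distinct rows, it is a proper convex combination of two distinct points of $P_X$, so it is not extreme. Together with the previous paragraph this gives the equality. A side consequence is that the rows are pairwise distinct, since $(r_i)_i=0\ne(r_j)_i$ for $j\ne i$.

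There is no real obstacle here. The only point needing care is the positivity $d(x_j,x_i)>0$ for $j\neq i$, which holds because $X$ is a genuine metric space and not a pseudometric one. The diameter bound plays no role in this argument.
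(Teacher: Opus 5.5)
Your proof is correct and takes the same route as the paper: the $i$-th coordinate vanishes at $r_i$ and is strictly positive at every other row, so no convex representation of $r_i$ can put weight on other rows. Your version is slightly more complete than the paper's. You verify extremality directly from the two-point definition, in fact showing $r_i$ is exposed. You also justify the inclusion $\ext(P_X)\subseteq\{r_1,\ldots,r_m\}$ explicitly, which the paper leaves implicit.
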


\begin{proof}
If, for some fixed \(i\), one had
\(r_i=\sum_{j\ne i}\alpha_jr_j\), \(\alpha_j\ge0\), \(\sum_{j\ne i}\alpha_j=1\),
then the \(i\)-th coordinate would give
\[
        0=r_i(i)=\sum_{j\ne i}\alpha_jd(x_j,x_i).
\]
All summands are nonnegative and \(d(x_j,x_i)>0\) for \(j\ne i\), so every
\(\alpha_j\) is zero, a contradiction.  Thus every \(r_i\) is a vertex, and the
listed vertices are exactly the extreme points of \(P_X\).
\end{proof}

For the compact metrizable convex sets considered here, we use the standard
Choquet terminology: a compact convex set \(K\) is a simplex if each point of
\(K\) has a unique representing probability measure carried by \(\ext(K)\);
see, for example, Alfsen's monograph \cite{Alfsen1971}.  In a finite-dimensional
polytope this is equivalent to ordinary affine independence of the vertices.

\begin{theorem}[finite criterion]\label{thm:finite-criterion}
Let \(X=\{x_1,\ldots,x_m\}\) be a finite metric space of diameter at most one,
and let \(M_X=(d(x_i,x_j))_{i,j}\).  The following are equivalent:
\begin{enumerate}
\item[\textup{(i)}] \(S'(X)\) is a Choquet simplex;
\item[\textup{(ii)}] \(P_X\) is an \((m-1)\)-simplex;
\item[\textup{(iii)}] the rows \(r_1,\ldots,r_m\) of \(M_X\) are affinely independent;
\item[\textup{(iv)}]
\[
        \rank\begin{pmatrix}
        1&d(x_1,x_1)&\cdots&d(x_1,x_m)\\
        1&d(x_2,x_1)&\cdots&d(x_2,x_m)\\
        \vdots&\vdots&&\vdots\\
        1&d(x_m,x_1)&\cdots&d(x_m,x_m)
        \end{pmatrix}=m;
\]
\item[\textup{(v)}]
\[
        \ker M_X\cap H_0=\{0\},
        \qquad
        H_0=\{a\in\R^m:\langle a,\one\rangle=0\}.
\]
\end{enumerate}
\end{theorem}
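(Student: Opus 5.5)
We prove the equivalences in the order (i)\(\Leftrightarrow\)(ii)\(\Leftrightarrow\)(iii)\(\Leftrightarrow\)(iv)\(\Leftrightarrow\)(v), using Proposition~\ref{prop:finite-reduction} and Lemma~\ref{lem:rows-vertices} for the geometric part and elementary linear algebra for the rest.

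For (i)\(\Leftrightarrow\)(ii): by Proposition~\ref{prop:finite-reduction}, \(S'(X)\) is affinely homeomorphic to \(P_X\). Being a Choquet simplex is preserved under affine homeomorphisms, since extreme points and representing measures transport along the map. So (i) says that \(P_X\) is a Choquet simplex.

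By Lemma~\ref{lem:rows-vertices}, \(\ext(P_X)=\{r_1,\ldots,r_m\}\) consists of exactly \(m\) distinct points. Probability measures on \(\ext(P_X)\) are therefore weight vectors \((\lambda_i)\) with \(\lambda_i\ge0\) and \(\sum\lambda_i=1\), and the barycenter of such a measure is \(\sum\lambda_ir_i\). So \(P_X\) is a Choquet simplex exactly when every point of \(P_X\) has a unique expression as a convex combination of the \(r_i\).

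\begin{itemize}
\item If the \(r_i\) are affinely independent, uniqueness of the expression is immediate. In that case \(P_X\) is by definition an \((m-1)\)-simplex.
\item Conversely, suppose there is an affine dependence \(\sum a_ir_i=0\) with \(\sum a_i=0\) and \(a\ne0\). Take \(c\) the barycenter \(\frac1m\sum r_i\). For small \(t\), both \(\frac1m\one\pm ta\) are strictly positive weight vectors summing to \(1\), and they give two different representations of \(c\).
\end{itemize}

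This shows (i)\(\Leftrightarrow\)(iii). For (ii)\(\Leftrightarrow\)(iii), note that \(P_X\) is the convex hull of its \(m\) vertices, so it is an \((m-1)\)-simplex precisely when these vertices are affinely independent.

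For (iii)\(\Leftrightarrow\)(iv): the rows \(r_i\) are affinely independent iff the augmented rows \((1,r_i)\in\R^{m+1}\) are linearly independent. For \(m\) rows, linear independence is the same as the augmented \(m\times(m+1)\) matrix having rank \(m\).

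For (iii)\(\Leftrightarrow\)(v): an affine dependence among the rows is a nonzero \(a\) with \(\sum a_i=0\) and \(\sum_i a_ir_i=0\). The second condition reads \(a^{T}M_X=0\). Since \(M_X\) is symmetric, this is \(M_Xa=0\), so an affine dependence is exactly a nonzero \(a\in\ker M_X\cap H_0\).

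The only step that is not bookkeeping is the converse direction of (i)\(\Leftrightarrow\)(iii): a nondegenerate dependence must yield two distinct representing measures carried by the extreme boundary. The interior-perturbation argument above handles it. It works because Lemma~\ref{lem:rows-vertices} guarantees that all \(r_i\) are extreme and distinct, so distinct weight vectors give distinct measures.
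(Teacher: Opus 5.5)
Your proof is correct and follows the paper's argument: you reduce to \(P_X\) via Proposition~\ref{prop:finite-reduction}, use Lemma~\ref{lem:rows-vertices} to identify the extreme points with the rows, and settle (iv) and (v) by the augmented-matrix criterion and the symmetry of \(M_X\). The only cosmetic difference is how you build two representing measures from an affine dependence: you perturb the uniform weights at the centroid, \(\frac1m\one\pm ta\), while the paper splits the dependence into its positive and negative parts, and both constructions work.
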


\begin{proof}
By Proposition~\ref{prop:finite-reduction}, \(S'(X)\) is affinely homeomorphic to \(P_X\).
By Lemma~\ref{lem:rows-vertices}, the extreme points of \(P_X\) are exactly
\(r_1,\ldots,r_m\).  A finite-dimensional polytope with vertex set \(V\) is a
Choquet simplex exactly when \(V\) is affinely independent: if \(V\) is
affinely independent, barycentric coordinates are unique, while if \(V\) has a
nontrivial affine dependence, separating the positive and negative coefficients
gives two different probability measures on \(V\) with the same barycenter.
Thus \textup{(i)}, \textup{(ii)}, and \textup{(iii)} are equivalent.

The equivalence between \textup{(iii)} and \textup{(iv)} is the standard
augmented-matrix criterion for affine independence.  Indeed, an affine
dependence among the rows is a vector \(a=(a_i)\in\R^m\) satisfying
\[
        \sum_{i=1}^m a_i=0,
        \qquad
        \sum_{i=1}^m a_i r_i=0.
\]
Writing the rows as a matrix gives precisely
\[
        \langle a,\one\rangle=0,
        \qquad
        a^T M_X=0.
\]
Since \(M_X\) is symmetric, \(a^TM_X=0\) is equivalent to \(M_Xa=0\).  Hence
\textup{(iii)} is equivalent to
\[
        \{a\in\R^m:a^TM_X=0,\ \langle a,\one\rangle=0\}=\{0\},
\]
which is \textup{(v)}.
\end{proof}

\section{The four-point obstruction and finite sharpness}
\label{sec:c4}

Let
\[
        C_4=\mathbb Z/4\mathbb Z
\]
with the diameter-one cycle metric
\[
        d(i,j)=\frac12\min\{k(i,j),4-k(i,j)\},
        \qquad i,j\in\mathbb Z/4\mathbb Z,
\]
where \(k(i,j)\in\{0,1,2,3\}\) is the representative of \(i-j\pmod 4\).
Thus adjacent points have distance \(1/2\), opposite points have distance \(1\),
and \(\diam(C_4)=1\).  The rows are
\begin{align*}
        r_0&=(0,\tfrac12,1,\tfrac12),&
        r_1&=(\tfrac12,0,\tfrac12,1),\\
        r_2&=(1,\tfrac12,0,\tfrac12),&
        r_3&=(\tfrac12,1,\tfrac12,0).
\end{align*}
They satisfy the affine relation
\begin{equation}\label{eq:c4-relation}
        r_0+r_2=r_1+r_3=(1,1,1,1).
\end{equation}
Equivalently, for the distance matrix
\[
        M_{C_4}=\begin{pmatrix}
        0&\tfrac12&1&\tfrac12\\
        \tfrac12&0&\tfrac12&1\\
        1&\tfrac12&0&\tfrac12\\
        \tfrac12&1&\tfrac12&0
        \end{pmatrix},
        \qquad
        \eta=(1,-1,1,-1)^T,
\]
one has
\begin{equation}\label{eq:c4-kernel}
        M_{C_4}\eta=0,
        \qquad
        \langle\eta,\one\rangle=0.
\end{equation}

\begin{theorem}[finite counterexample]\label{thm:c4-counterexample}
For the four-point metric space \(C_4\) above, \(S'(C_4)\) is not a Choquet
simplex.  More precisely,
\[
        S'(C_4)\cong P_{C_4}=\conv\{r_0,r_1,r_2,r_3\}
\]
is a non-degenerate parallelogram.  Its center
\[
        b=(\tfrac12,\tfrac12,\tfrac12,\tfrac12)
\]
has two distinct representing probability measures carried by
\(\ext(P_{C_4})\):
\[
        \mu_{02}=\frac12\delta_{r_0}+\frac12\delta_{r_2},
        \qquad
        \mu_{13}=\frac12\delta_{r_1}+\frac12\delta_{r_3},
\]
and
\[
        b=\int_{P_{C_4}} z\,\dd\mu_{02}(z)
         =\int_{P_{C_4}} z\,\dd\mu_{13}(z),
        \qquad
        \mu_{02}\ne\mu_{13}.
\]
\end{theorem}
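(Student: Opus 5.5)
The plan is to reduce everything to the finite polytope and then verify the parallelogram structure directly. By Proposition~\ref{prop:finite-reduction}, \(S'(C_4)\) is affinely homeomorphic to \(P_{C_4}=\conv\{r_0,r_1,r_2,r_3\}\). By Lemma~\ref{lem:rows-vertices}, its extreme points are exactly \(r_0,\dots,r_3\). The affine homeomorphism carries extreme points to extreme points and barycenters to barycenters, so it suffices to work inside \(P_{C_4}\).

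First I will establish the parallelogram claim. Relation \eqref{eq:c4-relation} is a direct coordinate check, and it gives \(r_2-r_1=r_3-r_0\) (equivalently \(r_0+r_2=r_1+r_3\)). For non-degeneracy I need the edge vectors \(u=r_1-r_0=(\tfrac12,-\tfrac12,-\tfrac12,\tfrac12)\) and \(v=r_3-r_0=(\tfrac12,\tfrac12,-\tfrac12,-\tfrac12)\) to be linearly independent. This is clear because they are nonzero and orthogonal: \(\langle u,v\rangle=\tfrac14-\tfrac14+\tfrac14-\tfrac14=0\). Hence
\[
P_{C_4}=\{r_0+su+tv:0\le s,t\le1\}
\]
is a genuine two-dimensional parallelogram, with vertices \(r_0\), \(r_0+u=r_1\), \(r_0+v=r_3\) and \(r_0+u+v=r_2\). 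Equivalently, the affine span has dimension \(2<3\), so condition (iii) of Theorem~\ref{thm:finite-criterion} fails. That alone already shows \(S'(C_4)\) is not a simplex.

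Next I will exhibit the explicit witnesses. The barycenter of \(\mu_{02}\) is \(\tfrac12(r_0+r_2)=\tfrac12(1,1,1,1)=b\), and the same computation using \eqref{eq:c4-relation} gives that \(\mu_{13}\) also has barycenter \(b\). Both measures are probability measures carried by \(\ext(P_{C_4})\). They are distinct because their supports \(\{r_0,r_2\}\) and \(\{r_1,r_3\}\) are different, and the four rows are pairwise distinct (for instance, their zero coordinates sit in different positions). Transporting \(\mu_{02}\) and \(\mu_{13}\) back through the affine homeomorphism gives two distinct representing measures on \(\ext S'(C_4)\), which contradicts the simplex definition.

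There is no real obstacle here; the only point needing care is the non-degeneracy step, namely that no three of the rows are collinear. The orthogonality computation above settles it.
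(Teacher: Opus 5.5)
Your proof is correct and follows essentially the same route as the paper: reduce to \(P_{C_4}\) via Proposition~\ref{prop:finite-reduction} and Lemma~\ref{lem:rows-vertices}, use \(r_0+r_2=r_1+r_3\) to get the common barycenter \(b\), check non-degeneracy by linear independence of two edge vectors, and separate \(\mu_{02}\) from \(\mu_{13}\) by their disjoint supports. The differences are cosmetic: you test \(r_1-r_0\) against \(r_3-r_0\) by orthogonality where the paper uses \(r_2-r_0=(1,0,-1,0)\), and you additionally note that the failure of condition (iii) of Theorem~\ref{thm:finite-criterion} already shows \(S'(C_4)\) is not a simplex.
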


\begin{proof}
Proposition~\ref{prop:finite-reduction} identifies \(S'(C_4)\) with \(P_{C_4}\), and
Lemma~\ref{lem:rows-vertices} gives
\[
        \ext(P_{C_4})=\{r_0,r_1,r_2,r_3\}.
\]
The relation \eqref{eq:c4-relation} gives
\[
        \frac12r_0+\frac12r_2
        =\frac12r_1+\frac12r_3
        =(\tfrac12,\tfrac12,\tfrac12,\tfrac12)=b.
\]
Moreover,
\[
        r_1-r_0=(\tfrac12,-\tfrac12,-\tfrac12,\tfrac12),
        \qquad
        r_2-r_0=(1,0,-1,0),
\]
are linearly independent.  Thus the four vertices are not collinear.  Since the
midpoints of the two diagonals agree by \eqref{eq:c4-relation}, their convex
hull is a non-degenerate parallelogram.  The two convex decompositions use
disjoint pairs of vertices, hence the corresponding measures \(\mu_{02}\) and
\(\mu_{13}\) are distinct.  Since a Choquet simplex has unique representing
measures carried by its extreme boundary, \(P_{C_4}\), and therefore
\(S'(C_4)\), is not a simplex.
\end{proof}

\begin{corollary}\label{cor:q1-finite}
There is a separable metric space \(X\) of diameter one for which \(S'(X)\) is
not a simplex.  Hence the assertion that \(S'(X)\) is always a simplex for
separable metric spaces \(X\) is false.
\end{corollary}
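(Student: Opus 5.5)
The corollary follows directly from Theorem~\ref{thm:c4-counterexample}, so I will simply take \(X=C_4\) with the diameter-one cycle metric.

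First I check that \(C_4\) qualifies. It is finite, hence separable: the constant-free enumeration \(0,1,2,3,0,1,2,3,\ldots\) is a countable dense sequence. Opposite points have \(d(0,2)=1\) and every distance is at most \(1\), so \(\diam(C_4)=1\) exactly rather than merely \(\le1\). The metric axioms are immediate, because \(d\) is one half of the graph distance on the \(4\)-cycle. Thus \(C_4\) lies in the class to which Sabok's construction \(S'\) applies.

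Next, Proposition~\ref{prop:finite-reduction} shows that \(S'(C_4)\) is well defined up to affine homeomorphism. For any dense enumeration it is identified with \(P_{C_4}\), so the answer does not depend on the coordinate choice. Theorem~\ref{thm:c4-counterexample} then supplies two distinct representing measures \(\mu_{02}\ne\mu_{13}\) carried by \(\ext(P_{C_4})\) for the center \(b\). Hence \(S'(C_4)\) is not a Choquet simplex, and being a simplex is invariant under affine homeomorphism.

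The second sentence of the corollary is the negation of the universal assertion, witnessed by \(X=C_4\). There is no real obstacle here. The only point needing care is that ``diameter one'' (rather than at most one) and separability both hold, and both are the immediate checks made above.
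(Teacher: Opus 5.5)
Your proposal is correct and matches the paper's proof, which likewise takes \(X=C_4\), notes that it is separable of diameter one, and applies Theorem~\ref{thm:c4-counterexample}. Your extra checks are harmless elaborations of steps the paper treats as immediate: the metric axioms via half the graph distance, and independence from the dense enumeration via Proposition~\ref{prop:finite-reduction}.
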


\begin{proof}
The finite space \(C_4\) is separable and has diameter one.  Apply
Theorem~\ref{thm:c4-counterexample}.
\end{proof}

The obstruction starts at four points.

\begin{proposition}[cardinalities one, two, and three]\label{prop:small-cardinalities}
If \(X\) is a finite metric space of diameter at most one and \(|X|\le3\), then
\(S'(X)\) is a simplex.
\end{proposition}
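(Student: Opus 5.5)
By Theorem~\ref{thm:finite-criterion}, it suffices to show that the distance rows \(r_1,\ldots,r_m\) are affinely independent when \(m\le3\). I will use criterion~\textup{(v)}: I will show that any \(a\in\R^m\) with \(\sum_i a_i=0\) and \(M_Xa=0\) must vanish. I treat the three cardinalities separately, since each is a direct computation with the off-diagonal distances, which are positive.

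For \(m=1\) there is nothing to prove, because a single point is a \(0\)-simplex and \(H_0=\{0\}\).

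For \(m=2\), set \(\delta=d(x_1,x_2)>0\). Then \(M_Xa=(\delta a_2,\delta a_1)\), so \(M_Xa=0\) already forces \(a=0\). Equivalently, \(r_1\ne r_2\), which also follows from Lemma~\ref{lem:rows-vertices}.

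For \(m=3\), write \(p=d(x_2,x_3)\), \(q=d(x_1,x_3)\), \(s=d(x_1,x_2)\), all positive. The equations \(M_Xa=0\) read
\[
        s a_2+q a_3=0,\qquad s a_1+p a_3=0,\qquad q a_1+p a_2=0.
\]
The cleanest route is to compute \(\det M_X=2pqs>0\). This shows \(M_X\) is already invertible, so \(\ker M_X=\{0\}\) and \textup{(v)} holds without even using the constraint \(\sum a_i=0\). I would verify this determinant by cofactor expansion of the symmetric zero-diagonal matrix:
\[
        \det\begin{pmatrix}0&s&q\\ s&0&p\\ q&p&0\end{pmatrix}=-s(0-pq)+q(sp-0)=2pqs.
\]

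There is no real obstacle here. The only point needing care is that positivity of the off-diagonal distances is what makes the argument work. The triangle inequality and the diameter bound are not needed, so the conclusion holds for any three-point semimetric with distinct points. Concluding via the equivalence \textup{(v)}\(\Leftrightarrow\)\textup{(i)} in Theorem~\ref{thm:finite-criterion} then finishes the proof.
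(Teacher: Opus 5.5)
Your proof is correct and matches the paper's argument. The paper also reduces to Theorem~\ref{thm:finite-criterion}, treats one and two points as immediate, and for three points computes \(\det M_X=2abc>0\); it phrases the conclusion as linear (hence affine) independence of the rows rather than through criterion~\textup{(v)}, which is the same thing.
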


\begin{proof}
The cases \(|X|=1\) and \(|X|=2\) are immediate from Theorem~\ref{thm:finite-criterion}.
For \(|X|=3\), write
\(a=d(x_1,x_2)\), \(b=d(x_1,x_3)\), and \(c=d(x_2,x_3)\), all positive.  The
row matrix is
\[
        \begin{pmatrix}0&a&b\\ a&0&c\\ b&c&0\end{pmatrix},
\]
whose determinant is \(2abc>0\).  Hence the rows are linearly, and therefore
affinely, independent.  Theorem~\ref{thm:finite-criterion} applies.
\end{proof}

\begin{proposition}[obstructions in all larger cardinalities]\label{prop:all-larger}
For every integer \(m\ge4\), there is an \(m\)-point metric space \(X_m\) of
diameter one such that \(S'(X_m)\) is not a simplex.
\end{proposition}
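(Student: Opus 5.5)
Our plan is to extend $C_4$ by adding $m-4$ extra points in such a way that the $C_4$ dependence survives in the enlarged distance matrix. By Theorem~\ref{thm:finite-criterion}\,(v), it suffices to produce an $m$-point space $X_m$ of diameter one and a nonzero $a\in\R^m$ with $M_{X_m}a=0$ and $\langle a,\one\rangle=0$. We take $a=(\eta,0,\dots,0)$, where $\eta=(1,-1,1,-1)$ lives on the copy of $C_4$. The first four entries of $M_{X_m}a$ vanish by \eqref{eq:c4-kernel}. For each added point $y$, the corresponding entry is
\[
        d(y,0)-d(y,1)+d(y,2)-d(y,3),
\]
so we need every new point to satisfy $d(y,0)+d(y,2)=d(y,1)+d(y,3)$.

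The simplest choice is to make each new point $y_k$, for $k=1,\dots,m-4$, equidistant from all four cycle points, say at distance $\tfrac12$. Among themselves the new points get mutual distance $\tfrac12$ as well. With this choice the required identity holds trivially, and all distances lie in $\{\tfrac12,1\}$, so $\diam(X_m)=1$, attained by opposite cycle points.

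It remains to check the triangle inequality. Since every nonzero distance lies in $[\tfrac12,1]$, any two nonzero distances sum to at least $1$, which is at least any third distance. Triangles involving a repeated point are trivial. Hence $X_m$ is a metric space.

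Finally, $a\neq0$ and $\langle a,\one\rangle=\langle\eta,\one\rangle=0$, so Theorem~\ref{thm:finite-criterion} shows that $S'(X_m)$ is not a simplex. Concretely, $\tfrac12(r_0+r_2)=\tfrac12(r_1+r_3)$ in $P_{X_m}$, and Lemma~\ref{lem:rows-vertices} shows these are two different representations by extreme points. There is no real obstacle here: the only point needing care is the choice of distances making the extra rows orthogonal to $\eta$, and the $[\tfrac12,1]$ trick settles metricity at once.
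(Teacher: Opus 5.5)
Your proposal is correct and is essentially the paper's argument: you adjoin \(m-4\) points equidistant from all four points of \(C_4\), so the dependence \(r_0+r_2=r_1+r_3\) persists (equivalently, your kernel vector \((\eta,0,\ldots,0)\) lies in \(\ker M_{X_m}\cap H_0\)), and then apply Theorem~\ref{thm:finite-criterion}. The only difference is that the paper places the new points at distance \(1\) from all other points rather than \(\tfrac12\); either choice keeps every nonzero distance in \([\tfrac12,1]\), so the triangle inequality holds automatically.
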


\begin{proof}
For \(m=4\), take \(X_4=C_4\).  For \(m>4\), add points \(y_5,\ldots,y_m\), each at
distance \(1\) from every other new point and from every point of \(C_4\), while
retaining the metric on \(C_4\).  The triangle inequality is immediate because
all newly introduced nonzero distances are \(1\) and all old distances are at
most \(1\).  In the enlarged distance rows, the new coordinates of
\(r_0^{(m)},r_1^{(m)},r_2^{(m)},r_3^{(m)}\) are all equal to \(1\), so the old
relation \(r_0^{(m)}+r_2^{(m)}=r_1^{(m)}+r_3^{(m)}\) remains valid.  Apply
Theorem~\ref{thm:finite-criterion}.
\end{proof}

Combining Propositions~\ref{prop:small-cardinalities} and~\ref{prop:all-larger} gives the exact
finite threshold.

\begin{corollary}[finite threshold]\label{cor:threshold}
The least cardinality of a finite metric space \(X\) of diameter at most one
for which \(S'(X)\) is not a simplex is \(4\).  Moreover, non-simplex examples
exist in every finite cardinality \(m\ge4\).
\end{corollary}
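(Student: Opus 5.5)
The corollary follows by combining the two preceding propositions, as the sentence before it says; there are two claims to check.

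For the least cardinality, note first that every finite metric space has at least one point, so cardinality \(0\) does not arise. If that convention is in doubt, the empty space gives the empty compactum, which is trivially not a counterexample. For \(1\le|X|\le3\), Proposition~\ref{prop:small-cardinalities} shows that \(S'(X)\) is a simplex, so no counterexample has fewer than four points. Theorem~\ref{thm:c4-counterexample} supplies a four-point example, namely \(C_4\), which has diameter exactly one (hence at most one) and for which \(S'(C_4)\) is not a simplex. So the set of cardinalities admitting a non-simplex example is nonempty, and its least element is \(4\).

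For the second assertion, Proposition~\ref{prop:all-larger} directly gives, for each \(m\ge4\), an \(m\)-point space \(X_m\) of diameter one, hence of diameter at most one, with \(S'(X_m)\) not a simplex.

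There is no real obstacle here. The only point worth checking is that the hypotheses match. The propositions are stated for diameter one or diameter at most one, and the corollary quantifies over diameter at most one, so the constructed examples qualify. Likewise, the lower bound from Proposition~\ref{prop:small-cardinalities} covers every space of diameter at most one, not only those of diameter exactly one.
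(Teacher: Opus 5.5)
Your proof is correct and matches the paper, which likewise just combines Proposition~\ref{prop:small-cardinalities} (no counterexample on at most three points) with Proposition~\ref{prop:all-larger}. That proposition gives examples in every cardinality \(m\ge4\), and its case \(m=4\) is exactly the \(C_4\) space of Theorem~\ref{thm:c4-counterexample} that you cite.
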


\section{Barycentric reformulation and finite shadows}
\label{sec:barycentric-finite}

The finite criterion can be restated as injectivity of a barycenter map.  Let
\[
        \Delta_{m-1}=\left\{p=(p_1,\ldots,p_m)\in\R^m:
        p_i\ge0,\ \sum_{i=1}^m p_i=1\right\}
\]
be the standard probability simplex.  Define
\[
        \beta_X:\Delta_{m-1}\longrightarrow P_X,
        \qquad
        \beta_X(p)=\sum_{i=1}^m p_i r_i=p^T M_X.
\]
If \(p,q\in\Delta_{m-1}\), then
\[
        \beta_X(p)=\beta_X(q)
        \quad\Longrightarrow\quad
        (p-q)^T M_X=0,
        \qquad
        \langle p-q,\one\rangle=0.
\]
Since \(M_X\) is symmetric, the first equality is equivalent to
\(M_X(p-q)=0\).  Conversely, suppose \(0\ne a\in\ker M_X\cap H_0\).  Write
\(a=a^+-a^-\), where \(a_i^+=\max\{a_i,0\}\) and
\(a_i^-=\max\{-a_i,0\}\).  Since \(\langle a,\one\rangle=0\) and \(a\ne0\),
\[
        t=\sum_i a_i^+=\sum_i a_i^->0.
\]
Then \(p=a^+/t\) and \(q=a^-/t\) are distinct elements of \(\Delta_{m-1}\).  Since
\(M_Xa=0\) and \(M_X\) is symmetric,
\[
        \beta_X(p)-\beta_X(q)=\frac1t a^T M_X=0.
\]
Thus
\begin{equation}\label{eq:bary-kernel}
        \beta_X\text{ is injective}
        \quad\Longleftrightarrow\quad
        \ker M_X\cap H_0=\{0\}.
\end{equation}
This is exactly Theorem~\ref{thm:finite-criterion}\textup{(v)}.

For \(C_4\), the two probability vectors
\[
        p_{02}=\left(\tfrac12,0,\tfrac12,0\right),
        \qquad
        p_{13}=\left(0,\tfrac12,0,\tfrac12\right)
\]
satisfy
\[
        p_{02}-p_{13}=\frac12(1,-1,1,-1)=\frac12\eta,
        \qquad
        M_{C_4}(p_{02}-p_{13})=0,
\]
so \(\beta_{C_4}(p_{02})=\beta_{C_4}(p_{13})\).  Coordinatewise,
\[
\begin{aligned}
        \beta_{C_4}(p_{02})
        &=\frac12(0,\tfrac12,1,\tfrac12)+\frac12(1,\tfrac12,0,\tfrac12)\\
        &=(\tfrac12,\tfrac12,\tfrac12,\tfrac12)\\
        &=\frac12(\tfrac12,0,\tfrac12,1)+\frac12(\tfrac12,1,\tfrac12,0)
        =\beta_{C_4}(p_{13}).
\end{aligned}
\]
The failure of the simplex property is precisely the non-injectivity of the
finite distance-potential map
\[
        p\longmapsto\left(\sum_i p_i d(x_i,x_1),\ldots,
        \sum_i p_i d(x_i,x_m)\right).
\]

This finite obstruction does not by itself prove the Urysohn statement.  In
the finite example \(X=C_4\), the coordinate family consists only of distances
to the four points of \(X\).  If the same four points are embedded in
\(\mathbb U_1\) and one takes distances to a dense set
\(D\subseteq\mathbb U_1\), additional coordinates generally destroy the
relation \(r_0+r_2=r_1+r_3\).  Moreover, Sabok proves that for every countable
dense \(D\subseteq\mathbb U_1\), the family \(d_{\mathbb U_1}D\) is independent
in the following finite sense: for every finite set, equivalently every finite
list of pairwise distinct points,
\(x_1,\ldots,x_n\in\mathbb U_1\), one can find distance coordinates whose
\(n\times n\) evaluation matrix is invertible \cite[Proposition 5.15]{Sabok2016}.
Thus the Urysohn part requires an argument in the full compactum, not merely a
finite affine dependence among points of \(\mathbb U_1\).

\section[The Katetov model of S-prime of the Urysohn sphere]{The Katetov model of S-prime of the Urysohn sphere}
\label{sec:urysohn-model}

We use the following standard form of the rational Urysohn sphere.  It is the
Fra\"iss\'e limit of finite rational metric spaces of diameter at most one.
Equivalently, it is a countable metric space \(D=(D,d)\) with rational
distances, diameter one, and the following extension property:

\begin{quote}
For every finite \(F\subset D\) and every map \(p:F\to\mathbb Q\cap[0,1]\)
such that
\[
        |p(x)-p(y)|\le d(x,y)\le p(x)+p(y)\qquad(x,y\in F),
\]
there exists \(z\in D\) such that \(d(z,x)=p(x)\) for all \(x\in F\).
\end{quote}

Its completion is the Urysohn sphere \(\U_1\).  This is the bounded counterpart
of Urysohn's universal homogeneous metric space \cite{Urysohn1927}; the
Kat\v{e}tov construction gives a flexible modern formulation
\cite{Katetov1988,Gromov2007}.  We shall use only the exact finite extension
property displayed above.  Standard references for the descriptive and
topological-dynamical use of the Urysohn space include
\cite{GaoKechris2003,Melleray2016}.

\begin{definition}
Let \(D=\QU_1\).  Define
\[
\begin{aligned}
        K(D)=\{f\in[0,1]^D:\;& |f(x)-f(y)|\le d(x,y)\le f(x)+f(y)\\
        &\text{ for all }x,y\in D\}.
\end{aligned}
\]
Elements of \(K(D)\) will be called bounded Kat\v{e}tov functions on \(D\).
\end{definition}

The set \(K(D)\) is a compact convex subset of the product cube \([0,1]^D\).
Compactness follows from closedness of the defining inequalities, and convexity
from the convexity of the same inequalities.  Since \(D\) is countable, \(K(D)\) is
metrizable.

Let
\[
        \iota:\U_1\longrightarrow[0,1]^D,
        \qquad
        \iota(z)(x)=d(z,x)
\]
be the distance-coordinate map.  Let \(S'_D(\U_1)\) denote the closed convex hull of this image, with respect to
the selected countable dense coordinate set \(D\).  Sabok's affine
homeomorphism type for \(S'(\U_1)\) does not depend on the chosen countable
dense coordinate set \cite[Proposition~5.11]{Sabok2016}; hence a failure of
the Choquet simplex property for this coordinate model implies the same failure
for \(S'(\U_1)\).

\begin{proposition}[Kat\v{e}tov model]\label{prop:urysohn-model}
For the coordinate set \(D=\QU_1\), one has
\[
        S'_D(\U_1)=K(D)\subseteq[0,1]^D.
\]
More precisely,
\[
        \cl_{[0,1]^D}\iota(\U_1)=K(D),
\]
and therefore the closed convex hull of \(\iota(\U_1)\) is \(K(D)\).
\end{proposition}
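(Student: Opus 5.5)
The plan is to show two inclusions, \(\cl\,\iota(\U_1)\subseteq K(D)\) and \(K(D)\subseteq\cl\,\iota(\U_1)\), and then use the convexity of \(K(D)\) for the final sentence.

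For the first inclusion, take \(z\in\U_1\). The triangle inequality gives
\[
        |d(z,x)-d(z,y)|\le d(x,y)\le d(z,x)+d(z,y)\qquad(x,y\in D).
\]
Since \(\diam(\U_1)\le1\), every coordinate of \(\iota(z)\) lies in \([0,1]\). Thus \(\iota(z)\in K(D)\). By the remark preceding the statement, \(K(D)\) is closed in \([0,1]^D\), so \(\cl\,\iota(\U_1)\subseteq K(D)\).

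The reverse inclusion is the substantive step. Fix \(f\in K(D)\) and a basic product neighbourhood of \(f\), determined by a finite set \(F\subset D\) and a tolerance \(\varepsilon>0\). I will find \(z\in D\subseteq\U_1\) with \(|d(z,x)-f(x)|<\varepsilon\) for all \(x\in F\). The plan is to perturb \(f|_F\) to a rational Katětov map \(p:F\to\mathbb Q\cap[0,1]\) satisfying the hypotheses of the displayed extension property, and then take the point \(z\in D\) it provides.

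The main obstacle is performing this perturbation while keeping both inequalities exact. The values \(f(x)\) may be irrational, and some inequalities may hold with equality. The case \(f(x)=0\) is the clearest instance: there the Katětov inequalities force \(f(y)=d(x,y)\) for all \(y\).

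My first attempt is to use the convex combination
\[
        p_t=(1-t)f|_F+t\,\one,\qquad 0<t<1.
\]
A convex combination of Katětov maps on \(F\) is again Katětov, and the constant map \(\one|_F\) is Katětov because \(\diam(F)\le1\). Away from the upper boundary, \(\one\) satisfies \(d(x,y)\le 2\) strictly and \(|1-1|=0<d(x,y)\) for \(x\ne y\). Consequently, for \(x\ne y\) and \(t>0\), \(p_t\) satisfies both inequalities strictly, except possibly \(d(x,y)\le p_t(x)+p_t(y)\). That one is strict as well unless \(d(x,y)=f(x)+f(y)=2\), which cannot happen since \(d(x,y)\le1\).

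Since \(F\) is finite, all these strict inequalities survive a sufficiently small perturbation. I then choose rationals \(p(x)\in[0,1]\) within \(\min(\varepsilon/2,\delta)\) of \(p_t(x)\), where \(\delta\) is one third of the smallest slack among the finitely many inequalities. If \(p_t(x)=1\) I keep the value \(1\); otherwise the value stays in the open interval.

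Taking \(t\) small makes \(\|p_t-f\|_\infty\le t<\varepsilon/2\). The extension property then supplies \(z\in D\) with \(d(z,x)=p(x)\) for \(x\in F\), hence \(|d(z,x)-f(x)|<\varepsilon\) on \(F\). This gives \(K(D)\subseteq\cl\,\iota(D)\subseteq\cl\,\iota(\U_1)\).

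Finally, \(K(D)\) is convex and equals the closure of \(\iota(\U_1)\). The closed convex hull of \(\iota(\U_1)\) therefore lies between \(\cl\,\iota(\U_1)=K(D)\) and \(K(D)\), so it equals \(K(D)\). This is \(S'_D(\U_1)=K(D)\).
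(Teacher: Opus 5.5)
Your proof is correct. Its skeleton matches the paper's: the triangle inequality plus closedness of \(K(D)\) for one inclusion, rational approximation on a finite \(F\) followed by the one-point extension property for the other, and convexity for the closed convex hull.

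The two arguments differ only in how a rational Kat\v{e}tov map \(p\) on \(F\) close to \(f|_F\) is produced. The paper notes that \(K(F)\) is a bounded polytope cut out by finitely many inequalities with rational data, since distances in \(D\) are rational. It is therefore the convex hull of rational vertices, and its rational points are dense. You instead use \(\one|_F\) as a strictly feasible point. For \(p_t=(1-t)f|_F+t\one\) and \(x\ne y\) one has
\(d(x,y)-|p_t(x)-p_t(y)|\ge t\,d(x,y)>0\) and \(p_t(x)+p_t(y)-d(x,y)\ge t\,(2-d(x,y))\ge t\). So every nontrivial Kat\v{e}tov inequality has positive slack. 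The only constraint that can remain tight is \(p_t(x)\le1\), which you handle by keeping the value \(1\), and any sufficiently fine rational rounding then works.

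Your route is fully elementary and quantitative. It also never uses the rationality of the distances in \(D\); it only needs \(0<d(x,y)\le 1\) for \(x\ne y\). The paper's route is shorter but relies on the polyhedral fact that bounded polyhedra with rational data have rational vertices. In exchange, it does not depend on finding a strictly feasible point such as \(\one\).
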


\begin{proof}
For every \(z\in\U_1\), the function \(x\mapsto d(z,x)\) satisfies
\[
        |d(z,x)-d(z,y)|\le d(x,y)\le d(z,x)+d(z,y),
\]
so \(\iota(\U_1)\subseteq K(D)\).  Since \(K(D)\) is closed,
\[
        \cl\iota(\U_1)\subseteq K(D).
\]

Conversely, let \(f\in K(D)\), let \(F\subset D\) be finite, and let
\(\varepsilon>0\).  Put
\[
\begin{aligned}
        K(F)=\{p\in[0,1]^F:\;& |p(x)-p(y)|\le d(x,y)\le p(x)+p(y)\\
        &\text{ for all }x,y\in F\}.
\end{aligned}
\]
This is a rational polytope, because all distances in \(D\) are rational; as a
bounded rational polytope, it is the convex hull of rational vertices, so its
rational points are dense.  Since \(f|F\in K(F)\), choose
\(p\in K(F)\cap(\mathbb Q\cap[0,1])^F\) such that
\[
        |p(x)-f(x)|<\varepsilon\qquad(x\in F).
\]
By the rational one-point extension property of \(D\), there exists
\(z\in D\subseteq\U_1\) such that
\[
        d(z,x)=p(x)\qquad(x\in F).
\]
Thus every basic product neighborhood of \(f\) meets \(\iota(D)\), and hence
meets \(\iota(\U_1)\).  Therefore \(f\in\cl\iota(\U_1)\), proving
\(\cl\iota(\U_1)=K(D)\).

Since \(\iota(\U_1)\subseteq K(D)\) and \(K(D)\) is convex and closed, the
closed convex hull of \(\iota(\U_1)\) is contained in \(K(D)\).  The opposite
inclusion follows from \(K(D)=\cl\iota(\U_1)\subseteq\cl\,\conv(\iota(\U_1))\).
Hence the closed convex hull is exactly \(K(D)\).  This proves
\(S'_D(\U_1)=K(D)\) for the fixed coordinate set \(D=\QU_1\); by the
coordinate-set invariance recalled above, this computes the affine
homeomorphism type of \(S'(\U_1)\).
\end{proof}

We shall use several elementary consequences of the defining inequalities.
They are recorded once in a form adapted to extremality arguments.

\begin{lemma}[Tight inequalities]\label{lem:tight}
Let \(u,v\in K(D)\), let \(e=(u+v)/2\), and set \(h=(u-v)/2\).  For
\(x,y\in D\), the following implications hold:
\begin{enumerate}
\item[\textup{(i)}] If \(e(x)=1\), then \(h(x)=0\).
\item[\textup{(ii)}] If \(e(x)+e(y)=d(x,y)\), then \(h(x)+h(y)=0\).
\item[\textup{(iii)}] If \(e(x)-e(y)=d(x,y)\), then \(h(x)-h(y)=0\).
\item[\textup{(iv)}] If \(e(y)-e(x)=d(x,y)\), then \(h(y)-h(x)=0\).
\end{enumerate}
\end{lemma}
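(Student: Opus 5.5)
The plan is to write \(u=e+h\) and \(v=e-h\), and in each case to read off one defining inequality of \(K(D)\) for \(u\) and one for \(v\). The hypothesis of the case will force both of these inequalities to hold with equality. This is the standard argument that a midpoint lying on a face forces both endpoints onto that face, applied to each of the affine constraints defining \(K(D)\).

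For \textup{(i)}, since \(u,v\in K(D)\) we have \(u(x)\le1\) and \(v(x)\le1\). Their average is \(e(x)=1\), so \(u(x)=v(x)=1\), and hence \(h(x)=(u(x)-v(x))/2=0\).

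For \textup{(ii)}, the lower triangle inequality gives \(u(x)+u(y)\ge d(x,y)\) and \(v(x)+v(y)\ge d(x,y)\). Adding the two and dividing by two yields \(e(x)+e(y)\ge d(x,y)\), and this holds with equality by hypothesis. So both summand inequalities are equalities, which gives \(u(x)+u(y)=v(x)+v(y)=d(x,y)\). Subtracting and halving then gives \(h(x)+h(y)=0\).

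For \textup{(iii)}, use the Lipschitz half of the definition: \(u(x)-u(y)\le|u(x)-u(y)|\le d(x,y)\), and likewise for \(v\). Averaging gives \(e(x)-e(y)\le d(x,y)\), which is an equality by hypothesis. Hence \(u(x)-u(y)=v(x)-v(y)=d(x,y)\), and subtracting gives \(h(x)-h(y)=0\).

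Case \textup{(iv)} is \textup{(iii)} with \(x\) and \(y\) interchanged, using the symmetry of \(d\).

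I do not expect a genuine obstacle. The only point needing care is to use the one-sided forms of the inequalities, namely \(u(x)-u(y)\le d(x,y)\) rather than the absolute-value form. This matters because averaging absolute values would not directly give the needed equality, while the signed forms are linear and average cleanly.
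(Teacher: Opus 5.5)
Your proposal is correct and follows the paper's proof essentially verbatim. In each case you average the relevant one-sided defining inequality for \(u\) and \(v\), observe that equality for the average \(e\) forces equality for both \(u\) and \(v\), and then subtract, using the signed form \(u(x)-u(y)\le d(x,y)\) in (iii) exactly as the paper does.
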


\begin{proof}
If \(e(x)=1\), then \(u(x),v(x)\le1\) and \((u(x)+v(x))/2=1\), whence
\(u(x)=v(x)=1\).

If \(e(x)+e(y)=d(x,y)\), then
\[
        u(x)+u(y)\ge d(x,y),\qquad
        v(x)+v(y)\ge d(x,y),
\]
and the average of the two left-hand sides is \(d(x,y)\).  Both inequalities
are equalities, so \(u(x)+u(y)=v(x)+v(y)\), equivalently \(h(x)+h(y)=0\).

If \(e(x)-e(y)=d(x,y)\), then
\[
        u(x)-u(y)\le d(x,y),\qquad
        v(x)-v(y)\le d(x,y),
\]
and the average of the two left-hand sides is \(d(x,y)\).  Again both
inequalities are equalities, giving \(h(x)-h(y)=0\).  The last assertion is
symmetric.
\end{proof}

\section{Two constant extreme points}
\label{sec:constants}

The first two extreme points are constants.  The constant \(1\) is extreme for
the trivial reason that all coordinates are maximized.  The constant \(1/2\)
is more informative: extremality is forced by equilateral triangles of side
one through every point of \(D\).

\begin{lemma}\label{lem:equilateral-through-point}
In the rational Urysohn sphere \(D=\QU_1\), for every \(x\in D\), there are
\(y,z\in D\) such that
\[
        d(x,y)=d(y,z)=d(z,x)=1.
\]
\end{lemma}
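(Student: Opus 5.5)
We apply the rational one-point extension property of \(D\) twice, each time to a finite set with rational prescribed distances that satisfy the Kat\v{e}tov inequalities.

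\emph{Step 1.} Take \(F=\{x\}\) and \(p(x)=1\). The only condition to check is the pair \((x,x)\). It reads \(|p(x)-p(x)|=0\le d(x,x)=0\le 2p(x)=2\), which holds. The extension property then gives \(y\in D\) with \(d(y,x)=1\). In particular \(y\ne x\).

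\emph{Step 2.} Take \(F=\{x,y\}\) and \(p(x)=p(y)=1\), with values in \(\mathbb Q\cap[0,1]\).
\begin{itemize}
\item The diagonal pairs work exactly as in Step 1.
\item For the pair \((x,y)\) we need \(|1-1|=0\le d(x,y)=1\le 1+1=2\), which holds.
\end{itemize}
So the extension property gives \(z\in D\) with \(d(z,x)=d(z,y)=1\). Combined with \(d(x,y)=1\), this produces the equilateral triangle of side one.

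There is no real obstacle here. The only points to watch are that the prescribed values lie in \(\mathbb Q\cap[0,1]\) and that the diagonal inequalities \(0\le 0\le 2p(x)\) are included in the hypothesis. This is the only place where the lemma uses that \(D\) has diameter exactly one, since the extension property allows value \(1\).

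Alternatively, one could do it in a single step: apply the Fra\"iss\'e extension property, in its finite-subspace form, to the three-point equilateral space containing \(x\). The two-step argument above is preferable, since it uses only the one-point property as stated.
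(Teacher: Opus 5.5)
Your proof is correct and is the paper's argument: apply the rational one-point extension property over \(\{x\}\) with value \(1\) to get \(y\), then over \(\{x,y\}\) with both values \(1\) to get \(z\), where the only nontrivial Kat\v{e}tov check is \(0\le d(x,y)=1\le 2\). Your side remark about ``diameter exactly one'' is slightly off, but it does not affect the argument: what the proof actually uses is that the extension property allows the prescribed value \(1\), which is what makes the diameter equal to one, rather than a separate hypothesis.
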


\begin{proof}
Apply the rational one-point extension property first to \(\{x\}\) with value
\(1\), obtaining \(y\in D\) with \(d(x,y)=1\).  Apply it again to
\(\{x,y\}\) with both prescribed values equal to \(1\); the only nontrivial
Kat\v{e}tov inequalities are \(0\le d(x,y)=1\le2\).  The resulting point
\(z\) satisfies \(d(z,x)=d(z,y)=1\), and hence \(x,y,z\) form a side-one
equilateral triangle.
\end{proof}

\begin{proposition}\label{prop:constant-extreme}
The two functions
\[
        \one(x)=1,
        \qquad
        \half(x)=\frac12\qquad(x\in D)
\]
are extreme points of \(K(D)\).
\end{proposition}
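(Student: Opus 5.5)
To prove Proposition~\ref{prop:constant-extreme}, I write each constant as a midpoint and apply Lemma~\ref{lem:tight}. So suppose \(u,v\in K(D)\) have midpoint \(e=(u+v)/2\) equal to the constant in question, and put \(h=(u-v)/2\). It suffices to show \(h\equiv0\), i.e.\ \(u=v=e\): a point of a convex set that is not the midpoint of two distinct points of the set is extreme.

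For \(\one\): every coordinate satisfies \(e(x)=1\), so Lemma~\ref{lem:tight}\textup{(i)} gives \(h(x)=0\) for every \(x\in D\). Hence \(u=v=\one\).

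For \(\half\), the plan is to exploit tight triangle inequalities. Fix \(x\in D\). By Lemma~\ref{lem:equilateral-through-point}, choose \(y,z\in D\) with \(d(x,y)=d(y,z)=d(z,x)=1\). For each of the three pairs we have \(e(x)+e(y)=\tfrac12+\tfrac12=1=d(x,y)\), and likewise for the pairs \(\{y,z\}\) and \(\{x,z\}\). Lemma~\ref{lem:tight}\textup{(ii)} therefore yields
\[
h(x)+h(y)=0,\qquad h(y)+h(z)=0,\qquad h(x)+h(z)=0.
\]
Adding the first and third and subtracting the second gives \(2h(x)=0\). Since \(x\) was arbitrary, \(h\equiv0\), so \(u=v=\half\).

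The only substantive point is the odd-cycle trick: a single tight edge only forces \(h(x)=-h(y)\), and it takes a side-one triangle, which is an odd cycle of tight pairs, to force \(h(x)=0\). The existence of such a triangle through every point is exactly what Lemma~\ref{lem:equilateral-through-point} supplies. I expect no other obstacle beyond checking that membership \(\half\in K(D)\) is clear: \(0\le d\le1=\tfrac12+\tfrac12\), and \(\one\in K(D)\) similarly.
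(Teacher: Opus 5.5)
Your proof is correct and follows the paper's argument essentially step for step. The constant \(\one\) is forced coordinatewise by Lemma~\ref{lem:tight}\textup{(i)}, and \(\half\) is forced by the three tight lower inequalities on a side-one equilateral triangle through each point, supplied by Lemma~\ref{lem:equilateral-through-point}, whose odd cycle gives \(2h(x)=0\).
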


\begin{proof}
Both functions belong to \(K(D)\).  The function \(\one\) is extreme because if
\(\one=(u+v)/2\) with \(u,v\in K(D)\), then \(u(x)=v(x)=1\) for all \(x\in D\).

Now suppose that \(\half=(u+v)/2\), and set \(h=(u-v)/2\).  Fix \(x\in D\).
By Lemma~\ref{lem:equilateral-through-point}, choose \(y,z\in D\) such that
\[
        d(x,y)=d(y,z)=d(z,x)=1.
\]
Since
\[
        \half(x)+\half(y)
        =
        \half(y)+\half(z)
        =
        \half(z)+\half(x)
        =1,
\]
all three lower Kat\v{e}tov inequalities are tight.  Lemma~\ref{lem:tight} gives
\[
        h(x)+h(y)=0,
        \qquad
        h(y)+h(z)=0,
        \qquad
        h(z)+h(x)=0.
\]
Adding the first and third equations and using the second gives \(2h(x)=0\).
Thus \(h(x)=0\).  Since \(x\) was arbitrary, \(h=0\), and \(u=v=\half\).
\end{proof}

\section{The half-radius pair}
\label{sec:radius}

Fix, once and for all, an equilateral triple
\[
        A=\{a_0,a_1,a_2\}\subset D,
        \qquad
        d(a_i,a_j)=1\quad(i\ne j).
\]
Such a triple exists by the extension property.  Let
\[
        r(x)=r_A(x)=d(x,A)=\min_{0\le i\le2}d(x,a_i).
\]
Since the distance to a set is \(1\)-Lipschitz,
\[
        |r(x)-r(y)|\le d(x,y)\qquad(x,y\in D).
\]
Define
\begin{equation}\label{eq:def-fg}
        f(x)=f_A(x)=\min\{1,\tfrac12+r(x)\},
        \qquad
        g(x)=g_A(x)=\max\{\tfrac12,1-r(x)\}.
\end{equation}

\begin{lemma}\label{lem:fg-in-K}
More generally, let \(D\) be a metric space of diameter at most one, let
\(A\subseteq D\) be nonempty and finite, and put
\[
        r(x)=d(x,A)=\min_{a\in A}d(x,a).
\]
Let \(K(D)\) denote the set of functions \(h\in[0,1]^D\) satisfying
\[
        |h(x)-h(y)|\le d(x,y)\le h(x)+h(y)
        \qquad(x,y\in D),
\]
and define
\[
        f(x)=\min\{1,\tfrac12+r(x)\},
        \qquad
        g(x)=\max\{\tfrac12,1-r(x)\}.
\]
Then \(f\) and \(g\) belong to \(K(D)\), and
\[
        f+g=\frac32\,\one.
\]
\end{lemma}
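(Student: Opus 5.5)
The plan is to check three things directly: the identity \(f+g=\tfrac32\one\), the range and upper Kat\v{e}tov inequalities \(|h(x)-h(y)|\le d(x,y)\), and the lower Kat\v{e}tov inequalities \(d(x,y)\le h(x)+h(y)\). The approach rests on two elementary observations. First, \(r\) is \(1\)-Lipschitz. Second, both \(f\) and \(g\) take values in \([\tfrac12,1]\), and the diameter bound makes the lower inequalities automatic.

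I would start with the identity by splitting into the cases \(r(x)\le\tfrac12\) and \(r(x)\ge\tfrac12\). If \(r(x)\le\tfrac12\), then \(f(x)=\tfrac12+r(x)\) and \(g(x)=1-r(x)\), so \(f(x)+g(x)=\tfrac32\). If \(r(x)\ge\tfrac12\), then \(f(x)=1\) and \(g(x)=\tfrac12\), with the same sum. At the boundary value \(r(x)=\tfrac12\) both formulas agree, so the case split is consistent. The same case analysis, using \(r\ge0\), shows that \(f(x),g(x)\in[\tfrac12,1]\subseteq[0,1]\).

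For the Lipschitz inequality, I would first record that \(r(x)=\min_{a\in A}d(x,a)\) is \(1\)-Lipschitz. For each \(a\), the triangle inequality gives \(d(x,a)\le d(x,y)+d(y,a)\). Taking the minimum over the finite set \(A\) yields \(r(x)\le d(x,y)+r(y)\), and the bound follows by symmetry. Then \(f=\phi\circ r\) and \(g=\psi\circ r\), where \(\phi(t)=\min\{1,\tfrac12+t\}\) and \(\psi(t)=\max\{\tfrac12,1-t\}\). Each of \(\phi,\psi\) is \(1\)-Lipschitz on \(\R\), as a min or max of a constant with a slope-\(\pm1\) affine function. A composition of \(1\)-Lipschitz maps is \(1\)-Lipschitz, so \(|f(x)-f(y)|\le d(x,y)\) and likewise for \(g\).

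For the lower inequality, since \(f,g\ge\tfrac12\), we get \(f(x)+f(y)\ge1\ge\diam(D)\ge d(x,y)\), and the same holds for \(g\). This completes membership in \(K(D)\). I do not expect a genuine obstacle here. The only point needing care is the \(1\)-Lipschitz property of \(r\), which uses finiteness (or just nonemptiness) of \(A\) so that the minimum is attained; everything else is a two-case computation.
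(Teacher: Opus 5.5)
Your proposal is correct and follows the paper's proof essentially step for step. Like the paper, you use the \(1\)-Lipschitz property of \(r\) composed with the \(1\)-Lipschitz scalar maps for the upper inequalities, the bound \(f,g\ge\tfrac12\) together with \(\diam(D)\le1\) for the lower inequalities, and the case split at \(r(x)=\tfrac12\) for the identity \(f+g=\tfrac32\one\).
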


\begin{proof}
Because \(A\ne\emptyset\) and \(\diam(D)\le1\), one has \(0\le r\le1\).  The
distance-to-a-set function is \(1\)-Lipschitz, since
\(d(x,A)\le d(x,y)+d(y,A)\) and the symmetric inequality hold.  The scalar maps
\(t\mapsto\min\{1,1/2+t\}\) and \(t\mapsto\max\{1/2,1-t\}\) are also
\(1\)-Lipschitz on \([0,1]\); hence \(f\) and \(g\) satisfy the upper Kat\v{e}tov
inequalities.  Moreover \(1/2\le f,g\le1\), so
\(f(x)+f(y)\ge1\ge d(x,y)\) and \(g(x)+g(y)\ge1\ge d(x,y)\).  Thus
\(f,g\in K(D)\).  Finally, if \(r(x)\le1/2\), then
\(f(x)=1/2+r(x)\) and \(g(x)=1-r(x)\), while if \(r(x)\ge1/2\), then
\(f(x)=1\) and \(g(x)=1/2\).  Hence \(f(x)+g(x)=3/2\) for all \(x\).
\end{proof}

We now prove that \(f\) is extreme.  The mechanism is local: on the region
\(\{r<1/2\}\), \(f\) is forced by tight distance inequalities from \(A\); on
the region \(\{r\ge1/2\}\), \(f\) is forced by the coordinate maximum \(f=1\).

\begin{proposition}\label{prop:f-extreme}
The function \(f=f_A\) is an extreme point of \(K(D)\).
\end{proposition}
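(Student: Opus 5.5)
The plan is to use the standard midpoint test for extremality, organized as in the proof of Proposition~\ref{prop:constant-extreme}. Suppose \(f=(u+v)/2\) with \(u,v\in K(D)\), and put \(h=(u-v)/2\). It suffices to show \(h(x)=0\) for every \(x\in D\). Each step will apply Lemma~\ref{lem:tight} with \(e=f\) to a defining Kat\v{e}tov inequality that is tight at \(f\). We split \(D\) into three pieces: the far region \(\{r\ge 1/2\}\), the triple \(A\) itself, and the near region \(\{0<r<1/2\}\).

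\emph{Far region.} If \(r(x)\ge1/2\), then \(f(x)=\min\{1,\tfrac12+r(x)\}=1\). Lemma~\ref{lem:tight}(i) then gives \(h(x)=0\) immediately; in particular the boundary case \(r(x)=1/2\) falls here.

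\emph{The triple.} For \(a_i\in A\) we have \(r(a_i)=0\), so \(f(a_i)=1/2\). Hence for \(i\ne j\),
\[
f(a_i)+f(a_j)=1=d(a_i,a_j),
\]
and each lower Kat\v{e}tov inequality on the pairs of \(A\) is tight. Lemma~\ref{lem:tight}(ii) gives \(h(a_0)+h(a_1)=h(a_1)+h(a_2)=h(a_2)+h(a_0)=0\). Adding the first and third equations and subtracting the second yields \(2h(a_0)=0\), and by symmetry \(h(a_i)=0\) for \(i=0,1,2\). This is the same equilateral-triangle mechanism used for \(\half\), now applied only at the three points of \(A\).

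\emph{Near region.} Let \(0<r(x)<1/2\), and choose \(a_i\in A\) attaining the minimum, so that \(d(x,a_i)=r(x)\). Then \(f(x)=\tfrac12+r(x)\), and
\[
f(x)-f(a_i)=r(x)=d(x,a_i),
\]
so the upper Kat\v{e}tov inequality between \(x\) and \(a_i\) is tight. Lemma~\ref{lem:tight}(iii) gives \(h(x)=h(a_i)\), which is \(0\) by the previous step. The three regions cover \(D\), so \(h\equiv0\), \(u=v=f\), and \(f\) is extreme.

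I expect no serious obstacle. The only point needing care is the order of the argument: the near region is controlled by the values at \(A\), so \(h|_A=0\) must be established first, and this needs the side-one equilateral structure of \(A\). Without it, the three tight relations \(h(a_i)+h(a_j)=0\) would not all be available.
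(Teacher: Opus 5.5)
Your proof is correct and is essentially the paper's argument: first $h(a_i)=0$ from the three tight lower inequalities on the side-one triangle $A$, then $h(x)=0$ by Lemma~\ref{lem:tight}(i) where $f(x)=1$, and by the tight upper inequality to a nearest point of $A$ where $r(x)<1/2$. The only cosmetic difference is that you split off $\{0<r<1/2\}$ from $A$, while the paper handles all of $\{f<1\}$, including $A$ itself, in a single case.
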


\begin{proof}
Suppose that \(f=(u+v)/2\) with \(u,v\in K(D)\), and put \(h=(u-v)/2\).  We
prove \(h=0\).

First consider the three points of \(A\).  For \(i\ne j\),
\[
        f(a_i)+f(a_j)=\frac12+\frac12=1=d(a_i,a_j).
\]
Hence Lemma~\ref{lem:tight} gives
\[
        h(a_i)+h(a_j)=0\qquad(i\ne j).
\]
These three equations imply \(h(a_0)=h(a_1)=h(a_2)=0\).

Let \(x\in D\).  If \(f(x)=1\), then Lemma~\ref{lem:tight} gives \(h(x)=0\).  If
\(f(x)<1\), then \(r(x)<1/2\).  Choose \(a_i\in A\) with \(d(x,a_i)=r(x)\).
Then \(f(x)-f(a_i)=r(x)=d(x,a_i)\).  Lemma~\ref{lem:tight} gives
\(h(x)=h(a_i)=0\).  Thus \(h=0\), and \(f\) is extreme.
\end{proof}

The proof for \(g\) is subtler.  On \(\{r<1/2\}\), \(g\) is again forced by
tight difference inequalities from \(A\).  On the plateau \(\{r\ge1/2\}\),
where \(g=1/2\), we use equilateral triangles contained in the plateau.

\begin{lemma}[Plateau triangles]\label{lem:plateau-triangles}
Let
\[
        B=\{x\in D:r(x)\ge1/2\}.
\]
For every \(x\in B\), there are \(y,z\in B\) such that
\[
        d(x,y)=d(y,z)=d(z,x)=1.
\]
\end{lemma}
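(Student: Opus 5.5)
The plan is to build \(y\) and \(z\) by two successive applications of the rational one-point extension property of \(D\), exactly as in Lemma~\ref{lem:equilateral-through-point}. This time we enlarge the finite set by the triple \(A\), so that the prescribed distances keep the new points inside the plateau \(B\).

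For \(y\), take \(F_1=\{x,a_0,a_1,a_2\}\) (if \(x\in A\), then \(r(x)=0\), so \(x\notin B\); hence the four points are distinct). Prescribe \(p(x)=1\) and \(p(a_i)=1/2\) for \(i=0,1,2\). Then check the Kat\v{e}tov inequalities on \(F_1\):
\begin{itemize}
\item for the pair \((a_i,a_j)\): \(0\le 1\le 1\);
\item for the pair \((x,a_i)\): \(|1-1/2|=1/2\le d(x,a_i)\le 3/2\).
\end{itemize}
The lower bound in the second item holds precisely because \(x\in B\) gives \(d(x,a_i)\ge r(x)\ge1/2\). All values are rational, so the extension property yields \(y\in D\) with \(d(y,x)=1\) and \(d(y,a_i)=1/2\). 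In particular \(r(y)=1/2\), so \(y\in B\).

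For \(z\), take \(F_2=\{x,y,a_0,a_1,a_2\}\) and prescribe \(p(x)=p(y)=1\) and \(p(a_i)=1/2\). The new checks are:
\begin{itemize}
\item for the pair \((x,y)\): \(0\le 1\le 2\);
\item for the pair \((y,a_i)\): \(1/2\le 1/2\le 3/2\);
\item the pairs \((x,a_i)\) and \((a_i,a_j)\) as before.
\end{itemize}
The extension property gives \(z\) with \(d(z,x)=d(z,y)=1\) and \(d(z,a_i)=1/2\), so \(z\in B\). Together with \(d(x,y)=1\), the points \(x,y,z\) form the required side-one equilateral triangle in \(B\).

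The only delicate point is choosing the distances to \(A\) so that the new points land in \(B\) while the triangle inequality still holds. The value \(1/2\) works: with \(d(a_i,a_j)=1\) it makes the lower inequality tight, and it is compatible with distance \(1\) to \(x\) exactly because \(x\in B\). One alternative is to prescribe distance \(1\) to every \(a_i\). That also works when \(d(x,a_i)\ge 0\), but I would keep \(1/2\) as the main choice, since its verification is the one carried out above.
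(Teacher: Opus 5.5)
Your proposal is correct and is essentially the paper's proof. It uses two successive rational one-point extensions, over \(A\cup\{x\}\) and then \(A\cup\{x,y\}\), prescribing distance \(1\) to \(x\) (and to \(y\)) and \(1/2\) to each \(a_i\), with the same Kat\v{e}tov checks; the hypothesis \(x\in B\) enters exactly through \(d(x,a_i)\ge 1/2\).

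Your closing aside is also right, and its condition \(d(x,a_i)\ge0\) always holds. Prescribing distance \(1\) to every \(a_i\) passes all the checks and gives \(y,z\in B\) with \(r(y)=r(z)=1\).
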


\begin{proof}
Fix \(x\in B\), and put \(s_i=d(x,a_i)\).  Then \(1/2\le s_i\le1\) for all
\(i\).  Prescribe a point \(y\) over \(A\cup\{x\}\) by
\(p_y(x)=1\) and \(p_y(a_i)=1/2\).  The Kat\v{e}tov inequalities are exactly
\(|1-1/2|\le s_i\le1+1/2\) for \((x,a_i)\), and
\(0\le d(a_i,a_j)=1\le1\) for \((a_i,a_j)\).  The extension property gives
\(y\in D\) with \(d(y,x)=1\) and \(d(y,a_i)=1/2\), so \(y\in B\).

Now prescribe \(z\) over \(A\cup\{x,y\}\) by
\(p_z(x)=p_z(y)=1\) and \(p_z(a_i)=1/2\).  The checks for \((x,a_i)\) and
\((a_i,a_j)\) are as above; for \((x,y)\) one has \(0\le1\le2\), and for
\((y,a_i)\) one has \(1/2\le d(y,a_i)=1/2\le3/2\).  Thus the extension
property gives \(z\in D\) with \(d(z,x)=d(z,y)=1\) and \(d(z,a_i)=1/2\).  Hence
\(z\in B\), and \(d(x,y)=d(y,z)=d(z,x)=1\).
\end{proof}

\begin{proposition}\label{prop:g-extreme}
The function \(g=g_A\) is an extreme point of \(K(D)\).
\end{proposition}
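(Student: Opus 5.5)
We follow the template of Propositions~\ref{prop:constant-extreme} and~\ref{prop:f-extreme}. Suppose \(g=(u+v)/2\) with \(u,v\in K(D)\), and put \(h=(u-v)/2\). The goal is to show \(h=0\) on all of \(D\). We split \(D\) into the plateau \(B=\{r\ge1/2\}\), where \(g=\tfrac12\), and its complement \(\{r<1/2\}\), where \(g=1-r\).

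\emph{Plateau.} Fix \(x\in B\). Lemma~\ref{lem:plateau-triangles} supplies \(y,z\in B\) with \(d(x,y)=d(y,z)=d(z,x)=1\). Since \(g=\tfrac12\) at all three points, each pair satisfies \(g+g=1=d\). So all three lower Kat\v{e}tov inequalities are tight, and Lemma~\ref{lem:tight}(ii) gives
\[
h(x)+h(y)=0,\qquad h(y)+h(z)=0,\qquad h(z)+h(x)=0.
\]
Exactly as in the proof for \(\half\), these equations force \(h(x)=0\). Thus \(h\) vanishes on \(B\).

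\emph{Off the plateau.} Let \(x\in D\) with \(r(x)<1/2\), and pick \(a_i\in A\) with \(d(x,a_i)=r(x)\). We need a tight difference inequality linking \(x\) to a point of \(B\). Here \(g(x)=1-r(x)\), which is \(1\) at \(x=a_i\), so Lemma~\ref{lem:tight}(i) handles \(A\) directly. For general \(x\), we use the extension property to find \(w\in D\) with
\[
d(w,x)=\tfrac12-r(x),\qquad d(w,a_i)=\tfrac12,\qquad d(w,a_j)=\tfrac12\ (j\ne i).
\]
This \(w\) lies in \(B\), with \(r(w)=1/2\) and \(g(w)=1/2\). Then
\[
g(x)-g(w)=\tfrac12-r(x)=d(x,w),
\]
so Lemma~\ref{lem:tight}(iii) gives \(h(x)=h(w)=0\), using the plateau step. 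For \(x=a_i\) this is consistent: \(d(w,a_i)=1/2=1-\tfrac12\).

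\emph{Main obstacle.} The hard part is verifying that this prescription for \(w\) is Kat\v{e}tov over \(A\cup\{x\}\) with rational values. The values are rational because \(r(x)\in\mathbb Q\). We must check four kinds of pairs.
\begin{enumerate}
\item[(a)] The pairs \((a_j,a_k)\): here \(0\le1\le1\), which holds.
\item[(b)] The pair \((x,a_i)\): we need \(|(\tfrac12-r)-\tfrac12|=r\le r\le1-r\), and \(r\le 1-r\) holds since \(r<1/2\).
\item[(c)] The pairs \((x,a_j)\) with \(j\ne i\): set \(s_j=d(x,a_j)\). We need \(r\le s_j\le 1-r\). The lower bound holds because \(r\) is the minimum. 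For the upper bound, if \(s_j>1-r\) we change the prescription to \(d(w,a_j)=\max\{\tfrac12,\,s_j-\tfrac12+r\}\).
\end{enumerate}
If the adjustment in (c) is needed, it raises \(d(w,a_j)\) above \(\tfrac12\). We then recheck the \(A\)-pairs in (a) and confirm that \(r(w)\) is still \(\ge1/2\), which holds since all prescribed values onto \(A\) are \(\ge1/2\). Upper bounds on the adjusted values follow from \(s_j\le1\) and \(r<1/2\). Should this adjustment fail, the fallback is to first chain through \(a_i\) itself, or through intermediate points with \(r=1/2\), and propagate \(h=0\) along tight difference inequalities.

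Combining the plateau step with the off-plateau step gives \(h=0\) on \(D\), so \(u=v=g\) and \(g\) is extreme.
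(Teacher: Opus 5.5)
Your proof is correct. The plateau step is exactly the paper's, but your off-plateau step takes a real detour. The paper anchors the region $\{r<1/2\}$ at $A$ rather than at $B$. Since $g(a_i)=1$, Lemma~\ref{lem:tight}(i) gives $h(a_i)=0$, which you note yourself. For $r(x)<1/2$ with $d(x,a_i)=r(x)$, one has $g(a_i)-g(x)=1-(1-r(x))=r(x)=d(a_i,x)$. This difference inequality is tight, so Lemma~\ref{lem:tight}(iv) gives $h(x)=h(a_i)=0$ in one line, with no further use of the extension property.

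Your route builds an auxiliary plateau point $w$ instead, which costs a second extension-property construction and a Kat\v{e}tov verification. That verification does go through, but the adjustment in (c) is not an exceptional case. The triangle inequality through $a_i$ gives $1=d(a_i,a_j)\le r+s_j$, so $s_j\ge 1-r$ always. Hence the unadjusted value $\tfrac12$ is admissible only when $s_j=1-r$. In general you should simply prescribe $d(w,a_j)=s_j+r-\tfrac12$, which is what your $\max$ always evaluates to. With this choice:
\begin{itemize}
\item $|p(x)-p(a_j)|=s_j+2r-1\le s_j=p(x)+p(a_j)$;
\item $p(a_i)+p(a_j)=s_j+r\ge 1$;
\item all prescribed values on $A$ lie in $[\tfrac12,1]$;
\item $r(w)=\tfrac12$.
\end{itemize}
So your ``fallback'' is unnecessary; also, you announce four kinds of pairs but list three.

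What your version buys is marginal: it reduces the off-plateau region to the plateau alone, without using that $g$ attains the top value $1$ on $A$. For this particular $g$, the paper's direct link to the nearest $a_i$ is strictly simpler.
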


\begin{proof}
Suppose that \(g=(u+v)/2\) with \(u,v\in K(D)\), and put \(h=(u-v)/2\).  We
prove \(h=0\).

For every \(i\), \(g(a_i)=1\), so Lemma~\ref{lem:tight} gives
\(h(a_i)=0\).

Let \(x\in D\).  If \(r(x)<1/2\), choose \(a_i\in A\) with
\(d(x,a_i)=r(x)\).  Then \(g(a_i)-g(x)=r(x)=d(a_i,x)\).  Lemma~\ref{lem:tight} gives
\(h(x)=h(a_i)=0\).

It remains to consider \(x\in B=\{r\ge1/2\}\).  By
Lemma~\ref{lem:plateau-triangles}, choose \(y,z\in B\) with
\[
        d(x,y)=d(y,z)=d(z,x)=1.
\]
On \(B\), \(g=1/2\), so the three lower inequalities are tight and
Lemma~\ref{lem:tight} yields
\(h(x)+h(y)=h(y)+h(z)=h(z)+h(x)=0\).  Hence \(h(x)=0\).  Therefore \(h=0\),
and \(g\) is extreme.
\end{proof}

\begin{remark}
The side-one triangle in \(A\) removes the possible alternating-sign ambiguity
in the proof of Proposition~\ref{prop:f-extreme}.  The plateau argument in
Proposition~\ref{prop:g-extreme} uses a different feature: the finite extension property
builds, through every point of \(B\), a side-one equilateral triangle contained
in \(B\).
\end{remark}

\section{Failure of Choquet uniqueness for the Urysohn sphere}
\label{sec:urysohn-non-simplex}

We now combine the four extreme points and verify explicitly that the two
representing measures below are genuinely different.

\begin{theorem}\label{thm:urysohn-main}
The compact convex set \(S'(\mathbb U_1)\) is not a Choquet simplex.  In
particular, \(S'(\mathbb U_1)\) is not affinely homeomorphic to the Poulsen
simplex.
\end{theorem}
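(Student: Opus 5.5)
We pass to the coordinate model and assemble the pieces established in Sections~\ref{sec:urysohn-model}--\ref{sec:radius}. By Proposition~\ref{prop:urysohn-model}, \(S'_D(\U_1)=K(D)\) for \(D=\QU_1\). By the coordinate-set invariance \cite[Proposition~5.11]{Sabok2016}, it therefore suffices to show that \(K(D)\) is not a Choquet simplex. We fix the equilateral triple \(A\) and the functions \(f=f_A\), \(g=g_A\) of Section~\ref{sec:radius}.

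By Lemma~\ref{lem:fg-in-K}, \(f+g=\tfrac32\one=\one+\half\). Hence the point \(c=\tfrac34\one\) is the barycenter of both
\[
        \mu=\tfrac12\delta_f+\tfrac12\delta_g,
        \qquad
        \nu=\tfrac12\delta_{\one}+\tfrac12\delta_{\half}.
\]
Since barycenters in \([0,1]^D\) are computed coordinatewise, this is an immediate check. Propositions~\ref{prop:constant-extreme}, \ref{prop:f-extreme} and~\ref{prop:g-extreme} show that all four atoms are extreme points of \(K(D)\). Thus \(\mu\) and \(\nu\) are probability measures carried by \(\ext K(D)\), which is a \(G_\delta\) because \(K(D)\) is metrizable, so there is no measurability issue.

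The step needing care is \(\mu\ne\nu\), that is, \(\{f,g\}\ne\{\one,\half\}\) as sets of points of \(K(D)\). We evaluate at \(a_0\in A\), where \(r(a_0)=0\). There \(f(a_0)=\tfrac12\) and \(g(a_0)=1\), so \(f\ne\one\) and \(g\ne\half\).

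To separate \(f\) from \(\half\), we need a point \(x\in D\) with \(r(x)>0\), so that \(f(x)>\tfrac12\). Either \(a_1\) works, since \(d(a_1,a_0)=1\) but \(r(a_1)=0\), so a better choice is required. The plateau point \(y\) of Lemma~\ref{lem:plateau-triangles} satisfies \(r(y)=\tfrac12\). It gives \(f(y)=1\ne\tfrac12\), and likewise \(g(y)=\tfrac12\ne1\), so \(g\ne\one\).

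Therefore \(f,g\notin\{\one,\half\}\) and the supports of \(\mu\) and \(\nu\) are disjoint, so \(\mu\ne\nu\). This is the only point where an error could slip in, and it is settled by these explicit evaluations. Consequently \(c\) has two distinct maximal representing measures, and \(K(D)\), hence \(S'(\U_1)\), is not a Choquet simplex.

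Finally, the Poulsen simplex is by definition a Choquet simplex. An affine homeomorphism maps extreme points to extreme points and transports representing measures bijectively, so it preserves the simplex property. Hence \(S'(\U_1)\) cannot be affinely homeomorphic to the Poulsen simplex.
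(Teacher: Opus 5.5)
Your proof is correct and follows the paper's argument: the same reduction to \(K(D)\), the same four extreme points and identity \(f+g=\one+\half\), and the same separation of \(\{f,g\}\) from \(\{\one,\half\}\) by evaluating at a point of \(A\) and at a point of the plateau \(B=\{r\ge1/2\}\). The one step to tighten is the plateau point. Lemma~\ref{lem:plateau-triangles} only produces \(y\) from a point \(x\in B\) you already have, so you should first exhibit a point of \(B\) directly. The paper does this by applying the extension property over \(A\) with all prescribed distances equal to \(1\), which gives \(b\) with \(r(b)=1\), \(f(b)=1\), \(g(b)=\tfrac12\). Also, the lemma's statement only gives \(r(y)\ge1/2\), not \(r(y)=1/2\), though that is all you need.
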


\begin{proof}
By Proposition~\ref{prop:urysohn-model}, it is enough to prove the assertion for \(K(D)\).
By Propositions~\ref{prop:constant-extreme}, \ref{prop:f-extreme}, and~\ref{prop:g-extreme}, the four points
\[
        f,
        \qquad
        g,
        \qquad
        \one,
        \qquad
        \half
\]
are extreme points of \(K(D)\).  By Lemma~\ref{lem:fg-in-K},
\[
        f+g=\frac32\,\one.
\]
Since also
\[
        \one+\half=\frac32\,\one,
\]
we have
\[
        \frac12 f+\frac12 g
        =
        \frac12\one+\frac12\half
        =
        \frac34\,\one.
\]
Thus the two probability measures
\[
        \mu=\frac12\delta_f+\frac12\delta_g,
        \qquad
        \nu=\frac12\delta_{\one}+\frac12\delta_{\half}
\]
are both carried by \(\ext K(D)\) and have the same barycenter
\[
        \bary(\mu)=\bary(\nu)=\frac34\,\one.
\]

It remains to check that these two measures are different.  For each
\(a_i\in A\), one has \(r(a_i)=0\), hence
\[
        f(a_i)=\frac12,
        \qquad
        g(a_i)=1.
\]
Apply the extension property over \(A\) with prescribed distances
\(p(a_i)=1\) for \(i=0,1,2\).  This prescription is valid because
\[
        |1-1|=0\le d(a_i,a_j)=1\le2=1+1
        \qquad(i\ne j),
\]
and all prescribed distances are rational and lie in \([0,1]\).  Hence there
is \(b\in D\) such that \(d(b,a_i)=1\) for all \(i\).  Then \(r(b)=1\), and
therefore
\[
        f(b)=1,
        \qquad
        g(b)=\frac12.
\]
These evaluations show that \(f\) is neither \(\one\) nor \(\half\), that
\(g\) is neither \(\one\) nor \(\half\), and that \(f\ne g\).  Also
\(\one\ne\half\).  Thus \(\{f,g\}\ne\{\one,\half\}\), so the atomic probability
measures \(\mu\) and \(\nu\) have different supports and are distinct.

As noted above, \(K(D)\) is compact metrizable: it is a closed subset of the
product cube \([0,1]^D\), and \(D\) is countable.  A metrizable Choquet simplex
has a unique representing probability measure carried by the extreme boundary
for each point; see, for example, \cite{Phelps2001,Alfsen1971,AsimowEllis1980}.
The point \((3/4)\one\) has the two different representing measures \(\mu\) and
\(\nu\) carried by \(\ext K(D)\).  Therefore \(K(D)\) is not a Choquet simplex.

The Poulsen simplex is, by definition, a metrizable Choquet simplex with dense
extreme boundary.  Since \(K(D)\) is not a Choquet simplex, it cannot be
affinely homeomorphic to the Poulsen simplex.
\end{proof}

\section{acknowledgements}
The author used GPT-based large language models to generate and compare possible proof strategies, suggest auxiliary lemmas, identify potential gaps, and assist with preliminary wording. All AI-generated suggestions were manually reviewed, selected, modified, and independently verified by the author. No unverified AI-generated proof or mathematical claim was incorporated into the final manuscript.

The authors used the CSSC framework, available at
\url{https://github.com/anetigone/cssc}, together with GPT-based assistance, to
develop Lean 4 formalizations of selected proof components in this paper. The
resulting Lean 4 development is available at
\url{https://github.com/anetigone/sabok-sprime-obstructions-lean}. The
formalization is intended as a machine-checkable supplement to the mathematical
arguments presented here, and should be understood within the scope described in
the accompanying repository.

\end{document}